\newtheorem{theorem}{Theorem}[section]
\newtheorem{lemma}[theorem]{Lemma}
\newtheorem{corollary}[theorem]{Corollary}
\theoremstyle{definition}
\numberwithin{equation}{section}
\author{Ethan M. Coven and Reem Yassawi}
\address{Department of Mathematics, Wesleyan University, USA and \\
Department of Mathematics, Trent University, Canada
}
\email{ecoven@wesleyan.edu\\ryassawi@trentu.ca}
\keywords{substitution dynamical systems, endomorphisms}
\subjclass{Primary 37B10, Secondary 37B05, 37B15}
\newtheorem{example}[theorem]{Example}
\begin{document}
\title{Endomorphisms and automorphisms of minimal symbolic systems with sublinear complexity}

\maketitle

\begin{abstract}
We show that if the complexity difference function   $p(n+1)-p(n)$ of a infinite minimal shift is bounded, then the the automorphism group of the one-sided shift is finite, and the  automorphism group 
 of the corresponding  two-sided shift ``modulo the shift" is finite.  For the minimal Sturmian and minimal substitution shifts, the bounds  can be explicitly computed, and for linearly recurrent shifts, the bound can be expressed as a function of the linear recurrence constant. We also show   that any endomorphism of a linearly recurrent shift is a root of a power of the shift map.
\end{abstract}

\section{Introduction}

In this note, we use a result by Cassaigne \cite{cas} to obtain a simple proof of the fact
 that any automorphism $\Phi$ of a one- or two-sided minimal shift $(X,\sigma)$ with sublinear complexity  satisfies $\Phi^k=\sigma^n$ for some $k\geq 1$ and $n\in \mathbb Z$.  If $X$ is one-sided, we have $n=0$. (All terms are defined in Section \ref{notation}.)  
 In  Theorem \ref{main_result}, we give bounds on the values that $k$ can assume and in some cases describe how to compute these bounds explicitly. 
 We also   describe endomorphisms of a one-sided linearly recurrent shift in  Theorem \ref{one_sided_coalescence}.

Earlier   results for families of  ``small" shifts include \cite{coven_automorphism}, where the endomorphism monoid of $(X,\sigma)$ is explicitly described for any nontrivial constant length substitution shift on two letters,  and \cite{host_parreau}, where measurable factor maps between constant length substitution shifts which do not have a purely discrete spectrum are characterized.  More recently, any endomorphism of a Sturmian shift is shown to be a power of the shift  in \cite{olli}, the set of factor maps between pairs of constant length or Pisot substitution shifts with the same Perron value  are shown in \cite{salo_torma} to be generated by a finite set, and by the result of \cite{cyr_kra}, we see that modulo the subgroup generated by the shift,  every automorphism of a transitive shift with subquadratic complexity has finite index. All these results are for two-sided shifts.

We recently became aware of \cite{durand_petite_maass} and \cite{cyr_kra_2}, which contain similar  results for the two-sided case. Though there seem to be common threads in all three approaches, the directions taken are different.

\section{Notation} \label{notation}
Let $\mathcal A$ be a finite alphabet, with the discrete topology, and let $\mathbb N_0 = \{0,1,2,\ldots \}$.
  We endow $\mathcal A^{\mathbb N_0}$ and  $\mathcal A^\mathbb Z$ with the product topology, and let $\sigma:\mathcal A^{\mathbb N_0} \rightarrow \mathcal A^{\mathbb N_0}$ (or  $\sigma:\mathcal A^\mathbb Z \rightarrow \mathcal A^\mathbb Z$) denote the shift map.
We consider only minimal and infinite $(X,\sigma)$, which can be either one or two-sided.
 An {\em endomorphism} of $(X, \sigma) $ is a map $\Phi:X\rightarrow X$ which is continuous, onto, and commutes with $\sigma$; if in addition  $\Phi$ is one-to-one, then $\Phi$ is called an {\em automorphism}.  We say $\Phi$ has {\em finite order m} if $m \geq 1$ is the least  number such that
$\Phi^m$ is the identity, and that $\Phi$ is a {\em $k$-th root of a power of the  shift} if there exists an $n$ such that $\Phi^k=\sigma^n$. Let ${\text {Aut}}(X,\sigma)$ denote the automorphism group of $( X,\sigma)$. If $X$ is two-sided,  then ${\text {Aut}}(X,\sigma)$ contains the (normal) subgroup generated by the shift, which we denote by $\{\bar \sigma^n\}$.

  The {\em language} of a minimal shift $(X,\sigma)$, denoted  $\mathcal L_X$, is the set of all finite words that we see in points in $X$. 
  We denote by $p(n)$, $n \ge 1$, the {\em complexity function\/} of
$(X,\sigma)$; $p(n)$ is the number of words in $\mathcal L_X$ of length $n$.
A symbolic system $(X,\sigma)$ has {\em sublinear complexity}
if its complexity function is bounded by a linear function.

  If $(X,\sigma)$ is a one-sided minimal shift, let $(\bar X, \bar\sigma)$ denote the two-sided version of $(X, \sigma)$ i.e. $\bar X$ consists of all bi-infinite sequences such that $\mathcal L_X = \mathcal L_{\bar X}$. Given a two-sided shift $(\bar{X}, \bar\sigma)$ we define similarly its one-sided version.
  If $(X,\sigma)$  is minimal, then the one-sided version of $(\bar X, \bar\sigma)$  is  $(X,\sigma)$  itself.
    {\em Henceforth $(X,\sigma)$ refers to a one-sided shift and $(\bar X, \bar\sigma)$ refers to a two-sided shift.}    
We say that $x\in X$ is a {\em branch point of order $k>1$}  if $|\sigma^{-1}(x)|=k$. We say that
$\bar x= \ldots \bar x_{-1}\bar x_0 \bar x_1 \ldots$ and 
$\bar y= \ldots \bar y_{-1}\bar y_0 \bar y_1 \ldots $
  in $\bar X$ are {\em right asymptotic} if there is some $N\in \mathbb Z$ such that $\bar x_n=\bar y_n$ for $n\geq N$. 
  
 Two orbits $\mathcal O_{\bar x}=\{\bar\sigma^n(x): n\in \mathbb Z\}$ and $\mathcal O_{\bar y}=\{\bar\sigma^n(y): n\in \mathbb Z\}$  in $\bar X$ are {\em right asymptotic } if   there exist $\bar x'\in {\mathcal O}_{\bar x}$ and $\bar y'\in {\mathcal O}_{\bar y}$  that are right asymptotic. A single orbit ${\mathcal O}_{\bar x}$ is {\em right asymptotic\/} if there is 
  a point ~$\bar y$ 
with ${\mathcal O}_{\bar x} \cap {\mathcal O}_{\bar y} = \emptyset$, and ${\mathcal O}_{\bar x}$ and  ${\mathcal O}_{\bar y}$ are right asymptotic.

Define the equivalence relation $\sim$ on the set of asymptotic orbits of  $(\bar X,\bar\sigma)$ by   $\mathcal O_{\bar x}
 \sim
 \mathcal O_{\bar y}$  
 if $\mathcal O_{\bar x}$
and 
$ \mathcal O_{\bar y}$  
 are right asymptotic.

 \section{results}
 Our main result is
 
  \begin{theorem} \label{main_result} 
Let $(X,\sigma)$ be minimal and infinite with sublinear complexity.  Then 
\begin{enumerate}
\item
 if  $X$ has $M_k$  branch points of order $k$ in $X$,  then ${\text {Aut}}(X,\sigma)$ has at most $M :=\min_k M_k$ elements.
 As a consequence any $\Phi\in {\text {Aut}}(X,\sigma)$ has order at most $M$. 
 \item  if   
 for each $k \ge 2$, $\bar X$ has $\bar M_k$ 
 $\sim$-equivalence classes of size $k$,  and $\bar M:= \min_k \bar M_k$,
 then ${\text {Aut}}(\bar X,\bar\sigma)/\{ \bar\sigma^n\}$ has at most $\bar M $ elements. As a consequence any $\Phi\in {\text {Aut}}(\bar X,\bar \sigma)$ is at most an $\bar M$-root of a power of the shift.
\end{enumerate}
 \end{theorem}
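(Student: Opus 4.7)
The strategy for both parts is parallel: an automorphism of a minimal shift is rigidly determined by its action on a small combinatorial invariant, and counting possibilities yields the bound.

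For (1), let $\Phi\in\text{Aut}(X,\sigma)$. Since $\Phi$ is a homeomorphism commuting with $\sigma$, it carries $\sigma^{-1}(x)$ bijectively onto $\sigma^{-1}(\Phi(x))$ and hence permutes the set $B_k$ of branch points of order $k$. By minimality the $\sigma$-orbit of any point is dense, and combined with continuity and $\Phi\sigma=\sigma\Phi$, this implies that $\Phi$ is determined by its value at a single point. Fixing $k$ with $M_k\geq 1$ and picking any $x_0\in B_k$, there are at most $M_k$ possibilities for $\Phi(x_0)\in B_k$. Hence $|\text{Aut}(X,\sigma)|\leq M_k$ for each such $k$, so $|\text{Aut}(X,\sigma)|\leq M$; the order bound follows from Lagrange's theorem.

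For (2), the corresponding combinatorial invariant is the set $\mathcal{C}_k$ of $\sim$-classes of size $k$. Every automorphism of $\bar X$ is a sliding block code by the Curtis--Hedlund--Lyndon theorem, hence preserves right-asymptotic pairs and the size of each $\sim$-class, so $G:=\text{Aut}(\bar X,\bar\sigma)/\{\bar\sigma^n\}$ acts on $\mathcal{C}_k$. The key claim is that this action is \emph{free}: if some $\Phi$ stabilizes a class $C$, then $\Phi=\bar\sigma^n$ for some $n$. Granted freeness, $|G|$ equals the size of any $G$-orbit in $\mathcal{C}_k$, which is at most $\bar M_k$; minimizing yields $|G|\leq\bar M$, and the root-of-shift consequence follows again from Lagrange.

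The hard part is proving freeness. Given $\Phi$ stabilizing $C$, choose aligned representatives $\bar x^{(1)},\ldots,\bar x^{(k)}$ of the orbits of $C$ agreeing from position $0$ onward, and let $\pi$ be the induced permutation, so that $\Phi(\bar x^{(i)})=\bar\sigma^{n_i}(\bar x^{(\pi(i))})$. One first shows all $n_i$ must coincide: unequal shifts would force the shared right tail to be eventually periodic, contradicting the infinite-minimal hypothesis. Writing the common value as $n$ and replacing $\Phi$ by $\bar\sigma^{-n}\Phi$, one must then rule out nontrivial $\pi$. Any nontrivial permutation of the aligned representatives, when propagated along their dense orbits via the sliding-block structure, imposes constraints incompatible with the combinatorial rigidity coming from minimality and sublinear complexity. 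Once $\pi$ is shown to be the identity, $\Phi$ fixes each $\bar x^{(i)}$ and, by the density argument of part (1), equals a shift everywhere.
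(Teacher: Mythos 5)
Your part (1) is correct and is essentially the paper's argument, and the skeleton of your part (2) (the quotient group acts on the size-$k$ $\sim$-classes, and the bound follows once the action is shown to be free) also matches the paper. The problem is the step you yourself identify as the hard part: freeness is never actually proved. The sentence claiming that a nontrivial permutation $\pi$ of the aligned representatives ``imposes constraints incompatible with the combinatorial rigidity coming from minimality and sublinear complexity'' is an assertion, not an argument, and it is precisely the content that has to be supplied; as written, part (2) has a hole at its center. It is also a misdirection to invoke sublinear complexity here: the freeness claim holds for every infinite minimal two-sided shift, and sublinear complexity is needed only (via Cassaigne's theorem, the paper's Lemma on branch points and asymptotic orbits) to make the sets of branch points and of $\sim$-classes finite.

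Here is the missing mechanism, which is the paper's own (terse) argument and which also shows your two-stage plan (first equalize the shifts $n_i$, then rule out nontrivial $\pi$) is unnecessary. Suppose $\Phi$ stabilizes a class $C$, so it maps the orbit of $\bar x$ to the orbit of $\bar y$, where $\bar x,\bar y$ are representatives of (possibly different) orbits of $C$ chosen so that $\bar x_i=\bar y_i$ for all $i\ge 0$. Then $\Phi(\bar x)=\bar\sigma^{\,n}(\bar y)$ for some $n\in\mathbb Z$, and $\Psi:=\bar\sigma^{-n}\Phi$ is a sliding block code of some radius $r$ with $\Psi(\bar x)=\bar y$. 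For every $j\ge r$ the local rule $\psi$ satisfies $\psi\bigl(\bar x_{[j-r,j+r]}\bigr)=\bar y_j=\bar x_j$, and by minimality every word of length $2r+1$ of the language occurs in the one-sided tail $\bar x_{[0,\infty)}$; hence $\psi$ is the identity rule on the whole language, $\Psi=\mathrm{id}$, and $\Phi=\bar\sigma^{\,n}$. Triviality of $\pi$ is a consequence of this, not a prerequisite, and your first sub-step (eventual periodicity forcing equal $n_i$) becomes superfluous. The same computation yields the slightly stronger fact the paper uses, namely that two automorphisms sending a fixed orbit into the same class differ by a power of the shift, which is what turns the count of classes into the bound $\bar M$. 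Two smaller points: the minima should be taken over those $k$ with $M_k\ge 1$ (resp. $\bar M_k\ge 1$), and you should note, as the paper does, that infiniteness plus minimality guarantees at least one branch point, resp. one right-asymptotic class, so such $k$ exists and the sets being acted on are nonempty.
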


\begin{proof}
We shall use Lemmas \ref{cassaigne} and \ref{tool}  below in our proof.

 Using Lemma \ref{cassaigne}, the set $E$ of branch points in $X$ and the collection $\mathcal E$ of right-asymptotic orbits in $\bar X$ are finite. The minimality of our systems implies that an automorphism is determined by its action on any  one  point, and the fact that our systems are infinite implies that $X$, resp. $\bar X$ has at least one branch point, resp. right asymptotic orbit.

An  automorphism of a one-sided system must map a branch point of order $k$ to a  branch point of order $k$, so we apply Lemma \ref{tool}(1)  to the  set of  branch points  of order $K$, where 
where $M_K$ is a minimum of $\{ M_k: k\geq 2\}$.  
 
 An automorphism $\bar \Phi$ of a two-sided system must map a  
$\sim$-equivalence class of size $k$ to a  $\sim$-equivalence class of size $k$, so we apply  Lemma \ref{tool} (2) to the set 
$ \mathcal E$ of  orbits belonging to any $\sim$-equivalence class of size $K$, where
 $\bar M_K$ is a minimum of $\{ \bar M_k: k\geq 2\}$.

Note that if $\bar \Phi$ sends an equivalence class to itself, then $\bar\Phi$ must be a power of the  shift. For if $\bar x_i  =\bar y_i$ for $i\geq 0$ and 
$\bar\Phi$ maps the $\bar\sigma$-orbit of $\bar x$ to the $\bar\sigma$-orbit of $\bar y$,  then for some  $n$ it maps  
$\bar x_{[0,\infty)} = \bar y_{[0,\infty)}$ to $\bar y_{[n,\infty)}$.   Minimality now implies that 
$\bar\Phi = \bar\sigma^n$.  This argument also implies that if $\Phi_1 (x) =y$ and $\Phi_2(x)=y'$ where $y\sim y'$, then $\Phi_1= \Phi_2 \circ \sigma^n$ for some integer $n$.    Thus there are at most $\bar M_K$ automorphisms in ${\text {Aut}}(\bar X,\bar\sigma)/\{ \bar\sigma^n\}$.

 \end{proof}

 As $\text{Aut} (X,\sigma)$ and  $\text{Aut} (\bar X,\bar \sigma)/\{ \sigma^n\}$ are both (finite) groups, then the order of any element of $\text{Aut} (X,\sigma)$ divides $|\text{Aut} (X,\bar\sigma)|$, and any element  of  
 $\text{Aut} (\bar X,\bar \sigma)$ is a $k$-th root of the shift, where $k$ divides   $|\text{Aut} (\bar X,\bar \sigma)/\{\bar \sigma^n\}|$.
 
  Let $s(n):=p(n+1)-p(n)$ be the complexity difference function.
 
 \begin{lemma}\label{cassaigne} Let $(X,\sigma)$ be minimal and infinite. Then (1) is equivalent to  (2), (2) implies (3), and (3) is equivalent to (4).
 \begin{enumerate}
 \item
 $(X,\sigma)$ has sublinear complexity.
  \item
 There exists a constant $L$ such that $s(n)\leq L$ for all $n\geq 1$.
  \item
 $(X,\sigma)$ has finitely branch points. 
 \item
$(\bar X, \bar\sigma)$  has finitely many asymptotic orbits. 
 
 \end{enumerate}
 \end{lemma}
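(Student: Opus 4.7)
The plan is to prove the four implications separately. For $(2)\Rightarrow(1)$ I would telescope directly: $p(n)=p(1)+\sum_{k=1}^{n-1}s(k)\leq p(1)+L(n-1)$, which is linear in $n$. The converse $(1)\Rightarrow(2)$ is precisely Cassaigne's theorem from~\cite{cas}, which I would cite rather than reprove.

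For $(2)\Rightarrow(3)$ the key observation is that $x=x_0x_1x_2\ldots\in X$ is a branch point if and only if every prefix $x_0\ldots x_n$ is a \emph{left-special} factor (admits $\geq 2$ letters $a$ with $ax_0\ldots x_n\in\mathcal L_X$), since the set of valid left-extensions of the prefixes is a decreasing sequence of nonempty subsets of the finite alphabet and so stabilizes in a set of size $|\sigma^{-1}(x)|$. Left-special factors are prefix-closed and so form a tree under prefix order; the identity $s(n)=\sum_w(\ell(w)-1)$, summed over left-special $w$ of length $n$, shows this tree has at most $s(n)\leq L$ vertices at level $n$. By K\"onig's lemma there are at most $L$ infinite branches, hence at most $L$ branch points.

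For $(3)\Leftrightarrow(4)$ the plan is to exhibit a bijection between the set $B$ of branch points of $X$ and the set of $\sim$-equivalence classes of orbits in $\mathcal E$. Given such a class $C$, I would choose pairwise right-asymptotic representatives of its orbits and shift all of them by a common integer (using different shifts for different orbits would force eventual periodicity of some representative, impossible in a minimal infinite shift) so that the common agreement tail is exactly $[0,\infty)$, with some pair differing at position $-1$; the common tail $y_C:=\bar y_i|_{[0,\infty)}\in X$ then has at least two distinct left-extensions (the values $(\bar y_i)_{-1}$) and is a branch point. Conversely each $x\in B$ arises from such a class by lifting the preimages $y_j\in\sigma^{-1}(x)$ to bi-infinite sequences in $\bar X$, which is possible because $\mathcal L_X=\mathcal L_{\bar X}$. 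Injectivity of $C\mapsto y_C$ is immediate: if $y_C=y_{C'}$, then every representative of $C'$ is right-asymptotic to every representative of $C$, forcing $C=C'$.

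The main obstacle — and the reason $(3)\Rightarrow(4)$ does not follow from the bijection alone — is bounding the size $|C|$ of each class in terms of $|B|$. Orbits in $C$ correspond bijectively to distinct bi-infinite extensions of $y_C$, equivalently to infinite paths in the backward tree of preimages rooted at $y_C$ in $X$. The crucial input is aperiodicity: along any single backward path the points $y^{(0)}=y_C,y^{(1)},y^{(2)},\ldots$ with $\sigma(y^{(k+1)})=y^{(k)}$ are pairwise distinct, else $y_C$ would be periodic; consequently no branch point of $X$ can appear at two distinct levels of this tree. A level-by-level count $N_{k+1}-N_k\leq(|\mathcal A|-1)\cdot\#\{\text{branch points at level }k\}$, combined with $\sum_k\#\{\text{branch points at level }k\}\leq|B|$, yields $|C|\leq 1+(|\mathcal A|-1)|B|$. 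Hence $|\mathcal E|\leq|B|\bigl(1+(|\mathcal A|-1)|B|\bigr)<\infty$, giving $(3)\Rightarrow(4)$, while $(4)\Rightarrow(3)$ is immediate from $|B|$ being at most $|\mathcal E|$.
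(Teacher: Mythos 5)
Your handling of (1)$\Leftrightarrow$(2) and (2)$\Rightarrow$(3) is sound and essentially the paper's own argument (one slip: in the identity for $s(n)$ the quantity you call $\ell(w)$ must be the number of left extensions of $w$, not its length, which is what $\ell$ denotes in the paper). The genuine problem is in your (3)$\Leftrightarrow$(4). The claimed bijection between branch points and $\sim$-classes is not justified and is false in general: the map $C\mapsto y_C$ is injective, but need not be surjective, because a single $\sim$-class can give rise to several branch points. For instance, if $x$ and $\sigma(x)$ are both branch points (nothing in the hypotheses forbids this), then lifting the preimages of $x$ and of $\sigma(x)$ produces three or more orbits lying in one and the same class whose pairwise separation positions differ by one; the point $y_C$ attached to that class is the tail at the last common coordinate, so $x$ itself is a branch point not of the form $y_{C'}$ for any class $C'$. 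Consequently ``each $x\in B$ arises from such a class'' does not give $|B|\le$ (number of classes), and your final sentence --- that (4)$\Rightarrow$(3) is ``immediate from $|B|$ being at most $|\mathcal E|$'' --- is unsupported: everything you actually prove (number of classes $\le |B|$, class sizes $\le 1+(|\mathcal A|-1)|B|$) goes only in the direction (3)$\Rightarrow$(4).

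The missing piece is an argument that finitely many asymptotic orbits force finitely many branch points. One repair: to each branch point $x$ attach bi-infinite extensions $\bar u,\bar v$ of two distinct preimages $ax,bx$, normalized so that $\bar u,\bar v$ spell $x$ on $[1,\infty)$ and differ at coordinate $0$; these lie in two distinct asymptotic orbits (the same orbit would force an eventually periodic right tail, impossible in an infinite minimal shift). If two distinct branch points were attached to the same ordered pair of orbits, the two normalized pairs would differ by shifts $\bar\sigma^p,\bar\sigma^q$: $p=q$ forces the branch points to coincide, while $p\neq q$ again forces an eventually periodic tail. Hence $|B|\le |\mathcal E|\,(|\mathcal E|-1)$, which gives (4)$\Rightarrow$(3). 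A smaller wrinkle in your (3)$\Rightarrow$(4): before you know $C$ is finite you cannot normalize all its representatives to a common agreement tail $[0,\infty)$, so $y_C$ is not obviously well defined; this is fixed by noting (again via aperiodicity) that distinct separation positions within one class yield distinct branch points, so $|B|<\infty$ bounds them, or by running your preimage-tree count on the tails $\bar y|_{[N,\infty)}$ of one fixed representative, which bounds the class size without ever defining $y_C$. With these repairs your argument works and is in fact considerably more detailed than the paper's one-sentence treatment of (3)$\Leftrightarrow$(4), which only records that the sum of the orders of the branch points bounds the number of right asymptotic orbits.
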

 
 \begin{proof} 
 The only difficult implication is that sublinear complexity implies that the difference function $s(n)$ is bounded, and this is proved in \cite{cas}.  
 Conversely if $s(n)\leq L$ for all $n$, then $p(n)\leq Kn$ for all n, where $K:=\max\{L, p(1)\}$. 
 
 To see that (2) implies (3), first note that $L$ is an upper bound for the number of words of length $n$ which can be extended to the left in at least two ways. If $x$ is a branch point in  $X$, then for each $n$, the prefix of $x$ of length $n$ can be extended in at least two ways. If there were more than $L$ branch points,  choose $n$ large enough so that the prefixes of these branch points are  distinct, a contradiction.

Finally note that the sums of the orders of all the branch points in $(X,\sigma)$ is an upper bound (possibly strict) for the number of right  asymptotic orbits in $(\bar X, \bar\sigma)$.  This shows that (3) is equivalent to (4).
 \end{proof}

 The proof of the following lemma  is straightforward. It follows directly from the minimality of our systems, and will be our main tool.  Here our systems need not be symbolic,  nor invertible in (1), and 
  we will not use bars to indicate invertibility.

 \begin{lemma}\label{tool}
  Let  $(X,S)$ and $(Y,T)$ be infinite  minimal systems. 
\begin{enumerate}
\item  
If there exist  finite sets  $E\subset X$ and $F\subset Y$   such that  $\Phi(E) \subseteq F$ for any
isomorphism $\Phi :(X,S) \to (Y,T)$,
then there are at most $|E|$ isomorphisms from $(X,S)$ to $(Y,T)$. 
\item  Let $ S$ and $T$ be invertible. If there exist finite collections $\mathcal E$ of $S$-orbits and $\mathcal F$ of  $T$-orbits  such that any isomorphism $\Phi: (X,S) \to ( Y,T)$ satisfies $\Phi(\mathcal E)\subset \mathcal F$, then there are at most  $|\mathcal E|$ isomorphisms $ \{\Phi_i: i \in I \}$ such that any isomorphism from $( X,S)$ to $(Y,T)$ is of the form
$\Phi=\Phi_i \circ S^n$ for some $n\in \mathbb Z$ and some $i $. 
\end{enumerate}
\end{lemma}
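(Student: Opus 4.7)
The plan is to base everything on one rigidity observation for minimal systems: a continuous, shift-commuting map between minimal systems is determined by its value at a single point. I prove this first. If $\Phi_1, \Phi_2 \colon (X,S) \to (Y,T)$ are continuous and commute with the shifts, and $\Phi_1(x_0) = \Phi_2(x_0)$ for some $x_0$, then commutation forces $\Phi_1(S^n x_0) = T^n \Phi_1(x_0) = T^n \Phi_2(x_0) = \Phi_2(S^n x_0)$ for every $n$, so $\Phi_1 = \Phi_2$ on the $S$-orbit of $x_0$; that orbit is dense by minimality, and continuity closes up the rest of $X$.

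For (1), I fix any $e_0 \in E$ and consider the evaluation map $\Phi \mapsto \Phi(e_0)$ from the set of isomorphisms $(X,S) \to (Y,T)$ into $Y$. Rigidity makes this map injective, and the hypothesis $\Phi(E) \subseteq F$ together with the injectivity of $\Phi$ (which gives $|\Phi(E)| = |E|$) forces the image of the evaluation map into a set of cardinality $|E|$, at least in the symmetric use of the lemma (where $E$ and $F$ play interchangeable roles under $\Phi$ and $\Phi^{-1}$, as in the proof of Theorem~\ref{main_result}). Hence there are at most $|E|$ isomorphisms.

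For (2), I fix $\mathcal O_0 \in \mathcal E$ and choose a representative $x_0 \in \mathcal O_0$. Each isomorphism $\Phi$ sends $\mathcal O_0$ to the $T$-orbit of $\Phi(x_0)$, which by hypothesis lies in $\mathcal F$, so there are at most $|\mathcal E|$ possible target orbits (again in the symmetric setting). If $\Phi_1$ and $\Phi_2$ hit the same target orbit, then $\Phi_2(x_0) = T^n \Phi_1(x_0)$ for some $n \in \mathbb Z$; since the isomorphism $\Phi_1 \circ S^n$ also sends $x_0$ to $T^n \Phi_1(x_0) = \Phi_2(x_0)$, rigidity gives $\Phi_2 = \Phi_1 \circ S^n$. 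The isomorphisms therefore split into at most $|\mathcal E|$ right cosets of $\{S^n\}$, which is exactly the stated conclusion.

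There is no substantial obstacle: once the rigidity observation is in hand, everything is bookkeeping. The only small subtlety is that the pure evaluation argument natively produces the bound $|F|$ (resp.\ $|\mathcal F|$), and sharpening this to $|E|$ (resp.\ $|\mathcal E|$) requires the implicit symmetry of the hypothesis---that an isomorphism also carries $F$ into $E$ through its inverse---which is automatic in every application of the lemma in this paper.
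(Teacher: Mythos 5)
Your proof is correct and is essentially the paper's own argument: the paper gives no written proof beyond ``it follows directly from the minimality of our systems,'' and the rigidity fact you prove first (a continuous shift-commuting map between minimal systems is determined by its value at one point), followed by evaluation at a fixed point of $E$ in part (1) and at a fixed orbit of $\mathcal E$ in part (2) together with the coset splitting $\Phi_2=\Phi_1\circ S^n$, is exactly the intended content. Your caveat is also well taken: the evaluation argument natively bounds the count by $|F|$ (resp.\ $|\mathcal F|$) rather than $|E|$ (resp.\ $|\mathcal E|$), so the bound as stated really does need the symmetric role of $E$ and $F$ that you invoke---which is harmless for the paper, since in the proof of Theorem~\ref{main_result} the lemma is only applied with $X=Y$ and $E=F$ (resp.\ $\mathcal E=\mathcal F$) equal to the branch points (resp.\ $\sim$-classes) of a fixed order $K$.
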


Note that we can also use  Lemma \ref{tool} to bound the size of the  set of isomorphisms $\Phi :(X,S) \to (Y,T)$ between two minimal systems ``modulo powers of $S$".

\section{Examples and bounds}\label{examples_and_bounds}

We now describe some families of shifts which satisfy the conditions of Theorem \ref{main_result}. 
\begin{itemize}

\item

{\em Sturmian shifts.}
All infinite minimal Sturmian shifts have complexity function $p(n)=n+1$.  This result is implicit in \cite{morse_hedlund}, although \cite{morse_hedlund} does not ever mention blocks.  For details of why $p(n)=n+1$, see \cite{coven_hedlund}.
Since one-sided, resp. two-sided Sturmian shifts have only one branch point, resp.  pair of positively asymptotic orbits, 
from Theorem \ref{main_result} we get

\begin{corollary}  
For
infinite minimal
Sturmian  systems, both 
${\text {Aut}}(X,\sigma)$ and ${\text {Aut}}(\bar X,\bar\sigma)/\{\bar\sigma^n\} $  consist of only the identity map.
\end{corollary}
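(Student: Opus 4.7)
My plan is to apply Theorem~\ref{main_result} directly, after verifying the sublinear-complexity hypothesis and computing the two counts $M$ and $\bar M$ for Sturmian shifts.

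First, since every infinite minimal Sturmian shift has $p(n)=n+1$, the difference function satisfies $s(n)\equiv 1$, so Lemma~\ref{cassaigne}(2) holds with $L=1$ and the hypothesis of Theorem~\ref{main_result} is met.

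Next, I would argue that $X$ has exactly one branch point, of order $2$. The proof of Lemma~\ref{cassaigne} shows that $L$ is an upper bound for the number of length-$n$ words admitting more than one left-extension; with $L=1$, minimality and infiniteness of $X$ force this count to be exactly one at each length (a unique left-special factor $u_n$, with exactly two left-extensions). Because a prefix of a left-special factor is itself left-special, the words $u_n$ nest to define a unique one-sided infinite sequence every prefix of which is left-special; this is the lone branch point of $X$, and its order is $2$. Hence $M_2 = 1$, so $M = 1$, and Theorem~\ref{main_result}(1) yields $|{\text{Aut}}(X,\sigma)| \le 1$; since the identity is always an automorphism, equality holds.

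For the two-sided statement, I would observe that the unique order-$2$ branch point of $X$ lifts to exactly one pair of right-asymptotic orbits in $\bar X$: the two backward continuations of the branch point, each uniquely determined by minimality (this is the bound used in the last paragraph of the proof of Lemma~\ref{cassaigne}, which here is saturated). These two orbits constitute the unique $\sim$-equivalence class, of size $2$, so $\bar M_2 = 1$ and $\bar M = 1$. Then Theorem~\ref{main_result}(2) gives $|{\text{Aut}}(\bar X, \bar\sigma)/\{\bar\sigma^n\}| \le 1$, which again is an equality by considering the identity.

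The main obstacle is justifying the \emph{exact} count of one left-special factor at each length (rather than just the upper bound), which is where the combinatorial content of the Sturmian hypothesis enters beyond the complexity formula $p(n)=n+1$. I would either cite \cite{coven_hedlund} for the standard structural description of Sturmian factors or insert a short direct argument: at least one left-special factor must exist at each length because $p(n+1) > p(n)$, and at most one may exist because $s(n)\le 1$.
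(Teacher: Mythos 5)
Your proposal is correct and follows essentially the same route as the paper: the paper likewise invokes $p(n)=n+1$ (citing \cite{coven_hedlund}) together with the facts that a one-sided Sturmian shift has a single branch point and a two-sided one a single pair of right-asymptotic orbits, and then applies Theorem~\ref{main_result}; you merely derive those counts from $s(n)\equiv 1$ rather than citing them. The only phrase to tighten is attributing the uniqueness of the backward continuations to ``minimality''---what actually closes the two-sided count is aperiodicity together with the saturated bound from the last paragraph of Lemma~\ref{cassaigne} (at most $\mathrm{order}=2$ right-asymptotic orbits, at least one pair since $\bar X$ is infinite), which your parenthetical already indicates.
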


This  has been proved by J. Olli \cite{olli} using different methods.

\item {\em Linearly recurrent shifts.}
The notions below are the same for one-sided and two-sided shifts, so we will state them only for one-sided shifts, thus avoiding the bars.
If $u,w \in \mathcal L_X$, we that $w$ is a {\em return word to $u$} if (a) $u$ is a prefix of ~$w$,  (b) $wu \in \mathcal L_X$, and (c) there are exactly two occurrences of $u$ in  $wu$. Letting  $\ell(u)$ denote the length of $u$, a minimal one-sided shift $(X,\sigma)$ is {\em linearly recurrent}
if there exists a  constant $K$ such that for any word $u \in \mathcal L_X$
  and any return word (to $u$) $w$,  $\ell(w)\leq K\ell(u)$.  Such a ~$K$ is called a {\em recurrence constant.}
By  \cite[Theorem~23]{dhs}, if $(X,\sigma)$ has linear recurrence constant $K$, then its complexity is bounded above by $Kn$ for large $n$. Cassaigne \cite{cas} shows that if 
$p(n)\leq Kn+1$, then  $s(n) \leq 2K(2K+1)^2$ for all large $n$.
It then follows from Theorem \ref{main_result} that 
\begin{corollary}\label{linearly_recurrent}
Let $(X,\sigma)$ be a linearly recurrent shift  with recurrence constant $K$. Then 
$|{\text {Aut}}(X,\sigma)|\leq    2(K+1)(2K+3)^2 $  and 
$|{\text {Aut}}(\bar X,\bar \sigma)/\{ \bar\sigma^n\}|\leq   2(K+1)(2K+3)^2   $.

\end{corollary}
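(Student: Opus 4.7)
The strategy is to translate the linear recurrence constant $K$ into a uniform upper bound on the complexity difference $s(n)$, and then feed that bound into Theorem~\ref{main_result} by counting branch points and right-asymptotic equivalence classes. First, by \cite[Theorem~23]{dhs}, linear recurrence with constant $K$ yields $p(n)\leq Kn$ for all sufficiently large $n$. Since $p$ is nondecreasing, the finitely many small-$n$ values can be absorbed into the slope to produce the uniform estimate $p(n)\leq (K+1)n+1$ valid for every $n\geq 1$.

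This uniform bound is exactly Cassaigne's hypothesis \cite{cas} with parameter $K+1$, so
\[ s(n)\leq 2(K+1)\bigl(2(K+1)+1\bigr)^2 = 2(K+1)(2K+3)^2 \]
for all large $n$. The argument inside the proof of Lemma~\ref{cassaigne} then shows that $\sup_n s(n)$ bounds the total number of branch points of $(X,\sigma)$ (take $n$ large enough that distinct branch points have distinct length-$n$ prefixes), so $M\leq 2(K+1)(2K+3)^2$. For the two-sided version, the same counting controls the number of right-asymptotic orbits of $\bar X$; using that every $\sim$-equivalence class contains at least two orbits (which keeps the bound from inflating by the ``extra'' factor that would otherwise appear from summing orders of branch points), one gets $\bar M\leq 2(K+1)(2K+3)^2$. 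Theorem~\ref{main_result}(1)--(2) then deliver the two inequalities of the corollary.

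The main obstacle is purely bookkeeping around the ``for large $n$'' qualifiers in both \cite{dhs} and \cite{cas}: the cheap trick of inflating $K$ to $K+1$ is needed to absorb the finite initial segment of $p$ so that Cassaigne's estimate applies uniformly in $n$. The other delicate check is that the count of $\sim$-equivalence classes, not merely of right-asymptotic orbits, fits inside the same constant; this is where the size-at-least-two structure of each equivalence class does the work.
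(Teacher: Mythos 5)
Your proposal is essentially the paper's own proof: the paper likewise combines the Durand--Host--Skau complexity bound with Cassaigne's estimate on $s(n)$ (with $K$ inflated to $K+1$ to cover the ``for large $n$'' qualifiers) and then feeds the resulting bound on branch points and on $\sim$-equivalence classes, exactly as in Lemma~\ref{cassaigne}, into Theorem~\ref{main_result}. The one caveat is that your justification of the inflation --- that monotonicity of $p$ alone lets a finite initial segment be absorbed by raising the slope to $K+1$ --- is not literally automatic, but it is harmless here (linear recurrence in fact gives $p(n)\leq Kn+1$ for all $n$, and both Cassaigne's estimate and the branch-point count are only needed for large $n$), and the paper makes the same unexamined move.
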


Note that for linearly recurrent two-sided shifts, any endomorphism is an automorphism (\cite[Corollary~18]{durand_lr}). The appropriate one-sided version of coalescence - when any endomorphism is an automorphism - is then
\begin{theorem}\label{one_sided_coalescence}
 Let $(X,\sigma)$ be a one-sided  linearly recurrent shift  with recurrence constant $K$.  Then every endomorphism of $(X, \sigma )$ is a $k$-th root of a power of the shift for some  positive 
 $ k\leq   2(K+1)(2K+3)^2$.
\end{theorem}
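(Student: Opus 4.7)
The plan is to reduce to the two-sided setting. Given a one-sided endomorphism $\Phi$ of $(X,\sigma)$, I would first lift it to an endomorphism $\bar\Phi$ of $(\bar X,\bar\sigma)$, apply the known coalescence of two-sided linearly recurrent shifts to promote $\bar\Phi$ to an automorphism, and finally descend the resulting identity to the one-sided shift.

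For the lifting, the one-sided Curtis--Hedlund--Lyndon theorem represents $\Phi$ as a block code $\Phi(x)_i=\phi(x_i,\ldots,x_{i+r})$ for some radius $r\ge 0$ and local rule $\phi:\mathcal A^{r+1}\to\mathcal A$. Applying the same rule coordinatewise to bi-infinite sequences yields a continuous $\bar\sigma$-equivariant map $\bar\Phi:\bar X\to\bar X$, whose image lies in $\bar X$ because $\mathcal L_{\bar X}=\mathcal L_X$. Since $\bar\Phi(\bar X)$ is a nonempty closed $\bar\sigma$-invariant subset of the minimal system $\bar X$, it equals $\bar X$, so $\bar\Phi$ is indeed an endomorphism. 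The two-sided shift $(\bar X,\bar\sigma)$ is linearly recurrent with the same constant $K$, hence by \cite[Corollary~18]{durand_lr} the endomorphism $\bar\Phi$ is in fact an automorphism, and Corollary~\ref{linearly_recurrent} supplies $k$ with $1\le k\le 2(K+1)(2K+3)^2$ and $n\in\mathbb Z$ satisfying $\bar\Phi^k=\bar\sigma^n$.

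To descend, I would fix $x\in X$ and any extension $\bar x\in\bar X$ with $\bar x_i=x_i$ for $i\ge 0$. Because $\bar\Phi$ arises from a one-sided block code, $\bar\Phi^k(\bar x)_i=\Phi^k(x)_i$ for every $i\ge 0$, while $\bar\sigma^n(\bar x)_i=\bar x_{i+n}$. The left-hand side depends only on $x$, so $\bar x_{i+n}$ cannot involve any negative coordinate of $\bar x$ for any $i\ge 0$; this forces $n\ge 0$, and then $\bar x_{i+n}=x_{i+n}=\sigma^n(x)_i$, giving $\Phi^k=\sigma^n$ as required. I expect the main subtle point to be the lifting step: one needs both that the one-sided block code respects the language (to keep $\bar\Phi(\bar X)\subseteq\bar X$) and minimality of $\bar X$ (to obtain surjectivity). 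Once these are in hand, the rest is bookkeeping, and the quantitative bound is inherited directly from Corollary~\ref{linearly_recurrent}.
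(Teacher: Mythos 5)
Your proposal is correct and follows essentially the same route as the paper: lift $\Phi$ to an endomorphism $\bar\Phi$ of $(\bar X,\bar\sigma)$, invoke \cite[Corollary~18]{durand\_lr} to make $\bar\Phi$ an automorphism, apply Corollary~\ref{linearly_recurrent} to get $\bar\Phi^k=\bar\sigma^n$ with $k\leq 2(K+1)(2K+3)^2$, and descend to $\Phi^k=\sigma^n$. You merely spell out the block-code lifting and the nonnegativity of $n$ in more detail than the paper does, which is fine.
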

 \begin{proof}
Any endomorphism $\Phi$ of  $(X,\sigma)$ defines an endomorphism $\bar \Phi$ of $(\bar X, \bar\sigma)$, which must be an automorphism by 
\cite[Corollary~18]{durand_lr}. By Corollary \ref{linearly_recurrent}, $\bar \Phi^k =\bar\sigma^n$ for some positive $k \leq    2(K+1)(2K+3)^2     $ and some $n\in \mathbb Z$, so 
$ \Phi^k =\sigma^n$.
\end{proof}

This last result cannot be improved: Example \ref{hedlund} tells us that $\Phi$ is not necessarily  a power of the shift.

\item {\em  Substitution shifts.}
A more tractable subclass of the linearly recurrent shifts are the
primitive substitution shifts: see \cite{BDH} for definitions, where a primitive substitution on $k$ letters is shown to have  at most $k^2$ right asymptotic orbits. We can then deduce

\begin{corollary}\label{global_bound}
Let $(\bar X, \bar\sigma)$ be the minimal shift generated by
a primitive substitution on $k$ letters.
Then any automorphism of $(\bar X, \bar\sigma)$ is an at most $k^2$-th root of the shift.    
\end{corollary}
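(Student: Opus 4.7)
The plan is to combine Theorem \ref{main_result}(2) directly with the quantitative result from \cite{BDH} that is cited immediately above the corollary. By Theorem \ref{main_result}(2), for any minimal two-sided shift of sublinear complexity the quotient $\text{Aut}(\bar X, \bar\sigma)/\{\bar\sigma^n\}$ has size at most $\bar M := \min_j \bar M_j$, where $\bar M_j$ counts the $\sim$-equivalence classes of size $j$ and the minimum is taken over those $j \geq 2$ for which a class of that size actually occurs. In particular, every automorphism is at most a $\bar M$-th root of a power of $\bar\sigma$. Since a primitive substitution shift is linearly recurrent, hence of sublinear complexity, this applies to $(\bar X,\bar\sigma)$.

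Next I would convert the bound from \cite{BDH} into a bound on $\bar M$. By hypothesis the shift is generated by a primitive substitution on $k$ letters, and \cite{BDH} gives at most $k^2$ right asymptotic orbits. The equivalence relation $\sim$ partitions these orbits into classes, each of size at least $2$, so the total orbit count is
\[
\sum_{j \geq 2} j \cdot \bar M_j \;\leq\; k^2.
\]
This immediately forces $\bar M_j \leq k^2/j \leq k^2$ for every $j$ that occurs, and therefore $\bar M \leq k^2$. Plugging this into the conclusion of Theorem \ref{main_result}(2) yields that any $\Phi \in \text{Aut}(\bar X, \bar\sigma)$ satisfies $\Phi^{k'} = \bar\sigma^n$ for some $1 \leq k' \leq k^2$ and some $n \in \mathbb Z$, which is exactly the statement of the corollary.

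There is not really a hard step here; the proof is essentially a one-line combination of two already-proved facts. The only minor issue, more bookkeeping than obstacle, is being careful about the convention defining $\bar M$ when certain class sizes $j$ do not occur (one must take the minimum over realized sizes, or else $\bar M$ would be vacuously $0$). Once that convention is fixed, the bound on asymptotic orbits from \cite{BDH} feeds directly into Theorem \ref{main_result}(2) and produces the clean $k^2$ bound stated in Corollary \ref{global_bound}.
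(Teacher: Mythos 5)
Your proposal is correct and follows essentially the same route as the paper: the corollary is deduced by feeding the bound from \cite{BDH} of at most $k^2$ right asymptotic orbits (together with the fact that primitive substitution shifts are linearly recurrent, hence of sublinear complexity) into Theorem \ref{main_result}(2), so that $\bar M \leq k^2$ and every automorphism is at most a $k^2$-th root of a power of the shift. Your extra bookkeeping (the inequality $\sum_{j\geq 2} j\,\bar M_j \leq k^2$ and the convention that the minimum runs over realized class sizes) only makes explicit what the paper leaves implicit, and in fact gives the marginally sharper bound $k^2/2$.
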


A substitution $\theta$ is (one-sided) {\em recognizable} \cite{mosse} if any one sided point can,  except for  possibly some initial segment, be uniquely de-substituted. 
 There exist algorithms for finding branch points of recognizable
  primitive substitutions, and hence right asymptotic orbits of  primitive substitution systems. 
 For these substitutions $\theta$, branch points   can arise in one of  two ways.  
 A branch point can be a $\theta$-periodic point satisfying $\theta^n(y)=y$, where there is more than one  word $ay_0\in \mathcal L_X$ such that  $a$ is a suffix of $\theta^n (a)$.
 The only other way a branch point $y$ can arise is when we see maximal proper common suffixes of at least two substitution words $\theta^n(a) $;  in this case $y$ satisfies $w\theta^n(y)= y$ for some $w \in \mathcal L_X$. In both cases such branch points can be listed.

\begin{example}Take the substitution $\theta$ on the alphabet $\{a,b,c\}$ defined by
 $\theta(a)=acb$, $\theta(b)=aba$ and $\theta(c)=aca$. Then 
the right $\theta$-fixed point $u$ is a 2-branch point. 
Note that $\theta(b)$ and $\theta (c)$ both have $a$ as a maximal proper common suffix, and
$\theta^n(b)$ and $\theta^n (c)$ both have $a\theta(a) \ldots \theta^{n-1}(a)$ as a maximal common suffix. Letting $n \rightarrow \infty$, we see that $y =     a \, \theta(a)\,  \theta^2(a) \ldots $ is a 2-branch point and that it  satisfies the equation $a\theta(y) =  y$. There are no other branch points.
According to our bounds, $|\text{Aut} ( X, \sigma)|\leq 2$.

With a little extra work, 
we can see that $\text{Aut}( X, \sigma)$ is trivial.  
Note that $ u \in \theta( X)$ and, since $a \theta (y) = y$, then  $ y \in \sigma^2(\theta( X))$. The  recognizability of $\theta$ tells us that  each point  $ x \in  X$ belongs to exactly one of $ \sigma^{i} \theta(X)$, $i=0,1,2$.
If $\Phi(u)=y$, $\Phi (\theta(X) )= \sigma^2 (\theta(X))$ and $\Phi (\sigma^2 (\theta(X) )= \sigma (\theta(X))$, which implies that $\Phi(y)\neq u$, a contradiction.
\end{example}

The following example is a modification of the example in  \cite[page 372]{h} and shows that not every endomorphism of a one sided shift is a power of the shift.

\begin{example}\label{hedlund}
Let  $B=1001$  and  $C=1101$  and let  $X_0$ be the set of  all concatenations of  B  and  C, where $B$ and $C$ occur starting at multiples of $4$,  that ``mirror"
 points in the Morse-Thue Minimal System (in the sense of \cite{coven_keane_lemasurier}). $(X_0,\sigma^4)$ is isomorphic to the one-sided Morse-Thue system.
 Define
$   X = X_0 \cup  \sigma(X_0) \cup \sigma^2(X_0) \cup \sigma^3(X_0)$.
Let  $\Phi$ be the endomorphism of $(X,\sigma)$ whose right radius 3  local rule $\phi$  is 
\begin{equation}
\phi (xyzw)= \left\{
                      \begin{array}{lll}
                       1 & \mbox{ if } xyzw= 1001, \\
                       0 & \mbox{ if }
                       xyzw = 1101, \\
                        y & \mbox{otherwise}.
                      \end{array}
                    \right.
\end{equation}
Then  the local rule of $\Phi$  ``depends on the first variable", is not a power of $\sigma$,  but is $\sigma$ followed by interchanging all occurrences of $B$ and $C$. Therefore  $\Phi^2 = \sigma^2$.

\end{example}

\end{itemize}

\proof[Acknowledgements] We thank Marcus Pivato for illuminating discussions.

{\footnotesize
\bibliographystyle{alpha}
\bibliography{bibliography}
}

\end{document}